\newcommand{\bR}{\mathbb{R}}
\newcommand{\bN}{\mathbb{N}}
\newcommand{\bu}{\mathbf{u}}
\newcommand{\bp}{\mathbf{p}}
\newcommand{\bv}{\mathbf{v}}
\newcommand{\bH}{\mathbf{H}}
\newcommand{\bP}{\mathbf{P}}
\newcommand{\bV}{\mathbf{V}}
\newcommand{\bkappa}{\bm{\upkappa}}
\newcommand{\btau}{\bm{\uptau}}
\DeclareMathOperator{\grad}{grad}
\newcommand{\rd}{\mathrm{d}}
\newtheorem{lemma}{Lemma}[section]
\theoremstyle{definition}
\newtheorem{definition}[lemma]{Definition}
\newtheorem{example}[lemma]{Example}
\newtheorem{scheme}[lemma]{Scheme}
\begin{document}

\title[Dissipative scheme for gradient flows of planar curves]{
Energy dissipative numerical scheme for gradient flows of planar curves 
using discrete partial derivatives and B-spline curves
}
\author{Tomoya Kemmochi}

\address{Graduate School of Mathematical Sciences, The University of Tokyo,
3-8-1, Komaba, Meguro-ku, Tokyo, 153-8914, Japan.}
\email{kemmochi@ms.u-tokyo.ac.jp}
\urladdr{http://t-kemmochi.github.io/}

\keywords{Structrue-preserving method, Gradient flow, Curvature flow, Elastic flow, Galerkin method, B-spline curves}
% \PACS{PACS code1 \and PACS code2 \and more}
\subjclass[2010]{65M60, 35A35, 53C44}
%65M60  	Finite elements, Rayleigh-Ritz and Galerkin methods, finite methods
%35A35  	Theoretical approximation to solution
%	53C44  	Geometric evolution equations (mean curvature flow, Ricci flow, etc.)

\maketitle

\begin{abstract}
In this paper, we develop an energy dissipative numerical scheme for gradient flows of planar curves, such as the curvature flow and the elastic flow.
Our study presents a general framework for solving such equations.
To discretize time, we use a similar approach to the discrete partial derivative method, which is a structure-preserving method for the gradient flows of graphs.
For the approximation of curves, we use B-spline curves.
Owing to the smoothness of B-spline functions, we can directly address higher order derivatives.
In the last part of the paper, we consider some numerical examples of the elastic flow, which exhibit topology-changing solutions and more complicated evolution.
Videos illustrating our method are available on YouTube%
%\footnote{URL: \url{https://www.youtube.com/playlist?list=PLMF3dSqWEii6coWnwHHOh12mrtMmIXIi0}}
.
\end{abstract}

\section{Introduction}
\label{section:intro}

% ?????

In this paper, we consider numerical methods for the computation of the $L^2$-gradient flow of a planar curve:
\begin{equation}
\bu_t = -\grad (\bu), \quad t>0,
\label{eq:intro-gf}
\end{equation}
where $\bu$ is a time-dependent planar curve. The gradient flow \eqref{eq:intro-gf} is energy dissipative,
since
\begin{equation}
\frac{d}{dt} E[\bu] = - \int |\grad E(\bu)|^2 ds \le 0.
\end{equation}
Here, $E$ is an energy functional, and $\grad E$ is the Fr\'echet derivative with respect to the $L^2$-structure with line integral $ds$. Thus, the curvature flow (the curve shortening flow)
\begin{equation}
\bu_t = \bkappa
\label{eq:intro-cf}
\end{equation}
and the elastic flow (the Willmore flow)
\begin{equation}
\bu_t = -2 \varepsilon^2 \left( \nabla_s^2 \bkappa + \frac{1}{2} |\bkappa|^2\bkappa \right) + \bkappa
\label{eq:intro-ef}
\end{equation}
have energy functionals
\begin{equation}
E[\bu] = \int ds, \quad \text{and} \quad
E[\bu] = \varepsilon^2 \int |\bkappa|^2 ds + \int ds,
\end{equation}
where $\bkappa$ is the curvature vector, and $\nabla_s$ is the tangential derivative (Example~\ref{ex:gf}).
Note that the elastic flow is a fourth-order nonlinear evolution equation.
% ??????

We consider a dissipative numerical scheme for \eqref{eq:intro-gf}, that is, a scheme which has the discrete energy dissipative property $E[\bu_h^{n+1}] \le E[\bu_h^n]$ at each time step.
In general, a numerical method that retains a certain property for a target equation is called structure-preserving.
It is known that the numerical solutions obtained by these methods are not only physically realistic but also have the advantage of numerical stability (cf.~\cite{FurM11,HaiLW06}).
In particular, structure-preserving methods are suitable for computations over long time intervals.

% ????Willmore???
Here, we consider in more detail the energy dissipation for the gradient flow \eqref{eq:intro-gf}.
As is well known, the (classical) solution of the curvature flow \eqref{eq:intro-cf} blows up in finite time.
In particular, if the initial curve is self-crossing, a cusp appears. 
On the other hand, the elastic flow \eqref{eq:intro-ef}, which is a regularized version of \eqref{eq:intro-cf}, has a unique global solution \cite{DziKS02}.
Moreover, as $\varepsilon \downarrow 0$, the solution of \eqref{eq:intro-ef} converges to that of \eqref{eq:intro-cf} under conditions when the classical solution of \eqref{eq:intro-cf} (see~\cite{BelMN07}) exists.
Note that the elastic flow formally degenerates to the curvature flow as $\varepsilon \downarrow 0$.
Let $T$ be the time at which a cusp appears.
Then, we are interested in determining the limit of the elastic flow as $\varepsilon \downarrow 0$ after time $T$.
Therefore, the structure-preserving method is effective for investigating numerically the long-time behavior of the elastic flow with self-crossing initial curves.

% ???? ??????????, ???????.
There are many works that consider the numerical computation of equations \eqref{eq:intro-cf} and \eqref{eq:intro-ef} (for example,~\cite{BarGN07,BarGN10,DecD09,DecDE05,DziKS02}).
However, none of them explicitly consider the discrete energy dissipation property.
Although some numerical examples in these works seem to be dissipative, no mathematical proofs are given.
Moreover, all of them use the P1-finite element method, which approximates solutions using polygonal curves.
Therefore, since they cannot handle higher order derivatives directly, it is necessary to consider the mixed formulation for fourth-order equations.
It is remarkable that these studies present numerical schemes for individual problems, and do not give a general framework for approximating gradient flows \eqref{eq:intro-gf}.
There is a further issue regarding the arrangement of nodal points, when a curve evolution equation is discretized with polygonal curves.
However, we do not consider this issue in detail in this paper.

For gradient flows of graphs, there are some general frameworks to construct dissipative numerical schemes.
In \cite{Fur99,FurM11}, a structure-preserving finite difference scheme, the so-called discrete variational derivative method, is proposed to approximate the solution of the equation
\begin{equation}
u_t = (-1)^{s+1} \left( \frac{\partial}{\partial x} \right)^{2s} \frac{\delta E}{\delta u}
\label{eq:intro-gf2}
\end{equation}
over an interval in $\bR$.
Here, $\delta E/ \delta u$ is the Fr\'echet derivative of the energy functional $E$ with respect to the usual $L^2$-structure, i.e., the $L^2$ space with respect to the Lebesgue measure.
Finite element schemes, called discrete partial derivative methods (DPDM), for the same problems are presented in \cite{Mat08}.
The main idea of these studies is to discretize the chain rule. 
In \cite{Mat08}, discrete partial derivatives are introduced and a discretization of the chain rule is obtained.
A similar approach is used in our scheme.
See also \cite{AimMM15} for local discontinuous Galerkin schemes for the above problems.

% ???????????????? (isogeometric analysis; IGA)

In the present paper, we apply the idea of DPDM to the discretization of the time variable.
We obtain the discrete chain rule formula with respect to the $L^2(ds)$-structure.
In contrast to problems such as \eqref{eq:intro-gf2}, our problem \eqref{eq:intro-gf} is accompanied by the line element $ds$, which increases the complexity of the problem.
Due to the inclusion of the line element, we cannot use the DPDM for \eqref{eq:intro-gf}.
Therefore, we will present a generalization of the DPDM and derive a new scheme for weak forms of general gradient flows (Scheme~\ref{scheme:semi-disc}).
For the approximation of curves, we use B-spline curves (cf.~\cite{Far99,PieT97}).
The B-spline approach (also called NURBS in general) is widely used to compute the solution to large-scale deformation problems, such as fluid-structure interaction (see, e.g., \cite{BazTT13}).
This method is called isogeometric analysis~\cite{CorHB09}.
It is worth emphasizing that a B-spline curve of degree $p$ (Definition~\ref{def:B-spline}) is a $C^{p-1}$ curve.
Hence we can directly address higher order derivatives,
and we can derive the fully discretized scheme by the Galerkin method (Scheme~\ref{scheme:full-disc}).
This procedure is independent of the properties of the energy functional, and thus, our scheme gives a general framework for the approximation of gradient flows \eqref{eq:intro-gf}.
We can also obtain the discrete energy dissipation with our scheme (Lemma~\ref{lem:dissipation}).
In this study, we do not consider solvability or error estimates.

% ?????
This paper is structured as follows.
In Section~\ref{section:preliminary}, we present some necessary definitions and 
notation for gradient flows on planar curves (Subsection~\ref{subsec:gf}), and then illustrate the use of the DPDM (Subsection~\ref{subsec:dpdm}) and
formally define B-spline curves (Subsection~\ref{subsec:bspline}).
In Section~\ref{section:scheme}, we derive our energy dissipative scheme (Scheme~\ref{scheme:full-disc}) under the framework of the DPDM, and
introduce the discrete energy dissipation property (Lemma~\ref{lem:dissipation}).
Finally, we present some numerical examples of our scheme in Section~\ref{section:num_exp}, and 
compute the elastic flow \eqref{eq:intro-ef}.
A topology-changing solution and more complicated evolution are reported, which have not previously been shown in the literature.
Videos illustrating our method are available on YouTube%
\footnote{URL: \url{https://www.youtube.com/playlist?list=PLMF3dSqWEii6coWnwHHOh12mrtMmIXIi0}}.

\section{Preliminaries}
\label{section:preliminary}

\subsection{Geometric gradient flows for planar curves}
\label{subsec:gf}
In this subsection, we summarize the basic properties of 
the geometric gradient flows of planar curves.
Let $\bH^1_\pi = \{ \bu \in H^1(0,1; \bR^2) \mid \bu(0) = \bu(1) \}$
and $\bH^m_\pi = \{ \bu \in H^m(0,1; \bR^2) \mid \bu' \in \bH^{m-1}_\pi \}$ for $m \in \bN$, $m \ge 2$.
Here, the space $H^m(0,1; \bR^2) = (H^m(0,1))^2$ is the $m$-th order Sobolev space of $L^2$-type.
Note that the space $\bH^m_\pi$ is embedded into the space of the planar closed $C^{m-1}$-curves.
We define an energy functional $E \colon \bH^m_\pi \to \bR$ as
\begin{equation}
E[\bu] = \int F(\bu,\bu', \dots, \bu^{(m)}) ds,
\quad \bu \in \bH^m_\pi,
\label{eq:eng}
\end{equation}
where $F \colon (\bR^2)^{m+1} \to \bR$ is the energy density function, and $ds = ds(\bu)$ is the line element of the curve $\bu$.
Let $\grad E$ be the Fr\'echet derivative of the functional $E$ in the topology of $L^2(ds)$. That is,
\begin{equation}
\langle \grad E(\bu), \bv \rangle 
= \int \grad E(\bu) \cdot \bv ds(\bu).
\end{equation}
Then, the gradient flow for $E$ is represented by the following evolution equation:
\begin{equation}
\bu_t = - \grad E(\bu), \quad t>0.
\label{eq:gf}
\end{equation}
Examples of the gradient flow are now considered.

\begin{example}\label{ex:gf}
\begin{enumerate}
\item (Curvature flow) If $E[\bu] = \int ds$, then equation \eqref{eq:gf} is the curvature flow
\begin{equation}
\bu_t = \bkappa,
\label{eq:curvature}
\end{equation}
where $\bkappa = \bu_{ss}$ is the curvature vector, and $s$ is the arc-length parameter.
\item (Elastic flow) If $E[\bu] = \varepsilon^2 \int |\bkappa|^2 ds + \int ds$, then equation \eqref{eq:gf} is the elastic flow (or Willmore flow)
\begin{equation}
\bu_t = -2\varepsilon^2 \left( \nabla_s^2 \bkappa + \frac{1}{2}|\bkappa|^2 \bkappa \right) + \bkappa,
\label{eq:elastic}
\end{equation}
where $\nabla_s \bv = (\bv, \btau) \btau - \bv$, and $\btau = \bu_s$.
\end{enumerate}
\end{example}
In this paper, we focus on the energy dissipation property, which is given as follows:
\begin{equation}
\frac{d}{dt} E[\bu] = \int \grad E(\bu) \cdot \bu_t ds 
= - \int |\grad E(\bu)|^2 ds \le 0.
\end{equation}

\subsection{Discrete partial derivative method}
\label{subsec:dpdm}
In this subsection, we introduce the DPDM, which was first presented in \cite{Mat08}.
Let $E \colon H^2_\pi(0,1) \to \bR$ be an energy functional that is defined as
\begin{equation}
E[u] = \int_0^1 G(u_\zeta,u_{\zeta\zeta}) d\zeta , \quad u=u(\zeta) \in H^2_\pi(0,1),
\label{eq:eng-graph}
\end{equation}
where $G=G(p,q) \colon \bR^2 \to \bR$ is the energy density function.
Although we can consider more general energy functionals and density functions, we consider energy functionals $E$ such as \eqref{eq:eng-graph} for simplicity.
Let us denote the first variation of $E$ by $\delta E/\delta u$, i.e.,
\begin{equation}
\frac{\delta E}{\delta u} = -\frac{\partial}{\partial \zeta} G_p(u_\zeta,u_{\zeta\zeta}) + \left( \frac{\partial}{\partial \zeta} \right)^2 G_q(u_\zeta,u_{\zeta\zeta}).
\end{equation}
The $L^2$-gradient flow for the energy $E$ is the evolution equation
\begin{equation}
u_t = - \frac{\delta E}{\delta u}, \quad t>0.
\label{eq:gf-graph}
\end{equation}
This equation also has the energy dissipation property as in \eqref{eq:gf}. Indeed,
\begin{equation}
\frac{d}{dt} E[u] = \int_0^1 \frac{\delta E}{\delta u} u_t d\zeta
= - \int_0^1 \left| \frac{\delta E}{\delta u} \right|^2 d\zeta
\le 0.
\end{equation}

DPDM is an energy dissipative numerical scheme for the case of equation \eqref{eq:gf-graph}.
In DPDM, the discrete partial derivatives
$\partial G_\rd / \partial (u',v')$ and the others are defined as the functions that satisfy the following relation:
\begin{equation}
G(u_\zeta,u_{\zeta\zeta}) - G(v_\zeta,v_{\zeta\zeta})
= \frac{\partial G_\rd}{\partial(u_\zeta,v_\zeta)}(u_\zeta-v_\zeta)
+ \frac{\partial G_\rd}{\partial(u_{\zeta\zeta},v_{\zeta\zeta})}(u_{\zeta\zeta} - v_{\zeta\zeta}),
%\quad \forall u,v \in H^2_\pi(0,1).
\label{eq:dpd-graph}
\end{equation}
for all $u,v \in H^2_\pi(0,1)$.
Note that the partial derivatives which solve this relation may not be unique.
When $G$ is written in a certain form, a method for deriving the partial derivatives is given in \cite{Mat08}.
Next, we define the discrete analogue of the first variation as
\begin{equation}
\frac{\delta E_\rd}{\delta(u,v)} = 
-\frac{\partial}{\partial \zeta} \frac{\partial G_\rd}{\partial(u_\zeta,v_\zeta)} 
+ \left( \frac{\partial}{\partial \zeta} \right)^2 \frac{\partial G_\rd}{\partial(u_{\zeta\zeta},v_{\zeta\zeta})}.
\end{equation}
Note that this function satisfies the relation
\begin{equation}
E[u] - E[v] = \int_0^1 \frac{\delta E_\rd}{\delta(u,v)} (u-v) d\zeta.
\label{eq:disc-chain-graph}
\end{equation}
Therefore, we can derive the discretized equation 
\begin{equation}
\frac{u^{n+1} - u^n}{\Delta t} = 
- \frac{\delta E_\rd}{\delta(u^{n+1},u^n)},
\quad n \in \bN.
\label{eq:dpdm-graph-strong}
\end{equation}
Following \cite{Mat08}, we can derive an energy dissipative time-discretization for \eqref{eq:gf-graph} as follows.
The following scheme is the weak form of equation \eqref{eq:dpdm-graph-strong}.
\begin{scheme}[DPDM for the gradient flow of a graph]
Let $n \in \bN$ and $u^n \in H^2_\pi(0,1)$ be given.
Find $u^{n+1} \in H^2_\pi(0,1)$ that satisfies
\begin{equation}
\left( \frac{u^{n+1} - u^n}{\Delta t}, v \right) 
= - \left( \frac{\partial G_\rd}{\partial(u^{n+1}_\zeta,u^n_\zeta)}, v_\zeta \right)
- \left( \frac{\partial G_\rd}{\partial(u^{n+1}_{\zeta\zeta},u^n_{\zeta\zeta})}, v_{\zeta\zeta} \right),
%\quad \forall v \in H^2_\pi(0,1),
\label{eq:dpdm-graph}
\end{equation}
for all $v \in H^2_\pi(0,1)$.
%for $n \in \bN$ and given $u^n \in H^2_\pi(0,1)$.
\end{scheme}

We can now check the discrete energy dissipation property with the following lemma.
Here we give the proof for comparison with our scheme (see Lemma~\ref{lem:dissipation}).
\begin{lemma}
Let $u^n$ and $u^{n+1}$ satisfy the relation \eqref{eq:dpdm-graph}.
Then, we have
\begin{equation}
\frac{E[u^{n+1}] - E[u^n]}{\Delta t} 
= - \left\| \frac{u^{n+1} - u^n}{\Delta t} \right\|_{L^2(0,1)}^2
\le 0
\end{equation}
for all $n \in \bN$ and $\Delta t > 0$.
\end{lemma}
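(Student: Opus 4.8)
The plan is to combine the discrete chain rule with a judicious choice of test function, after which the result follows by pure algebra. First I would integrate the defining pointwise relation \eqref{eq:dpd-graph} for the discrete partial derivatives over $(0,1)$, taking $u = u^{n+1}$ and $v = u^n$. Recalling the definition \eqref{eq:eng-graph} of the energy, this rewrites the energy increment in the Galerkin form matching the scheme:
\begin{equation}
E[u^{n+1}] - E[u^n]
= \left( \frac{\partial G_\rd}{\partial(u^{n+1}_\zeta,u^n_\zeta)}, (u^{n+1}-u^n)_\zeta \right)
+ \left( \frac{\partial G_\rd}{\partial(u^{n+1}_{\zeta\zeta},u^n_{\zeta\zeta})}, (u^{n+1}-u^n)_{\zeta\zeta} \right).
\end{equation}
This is the content of \eqref{eq:disc-chain-graph} specialized to consecutive time levels, but kept in the form before integration by parts so that it lines up term-by-term with the weak form \eqref{eq:dpdm-graph}.

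The key step is then to insert the test function $v = u^{n+1} - u^n$ into the scheme \eqref{eq:dpdm-graph}. This is admissible because $u^{n+1}, u^n \in H^2_\pi(0,1)$ and the space is linear, so their difference again lies in $H^2_\pi(0,1)$; moreover $v_\zeta = (u^{n+1}-u^n)_\zeta$ and $v_{\zeta\zeta} = (u^{n+1}-u^n)_{\zeta\zeta}$. Comparing the right-hand side of the displayed identity with \eqref{eq:dpdm-graph} for this $v$, the two discrete-partial-derivative inner products are exactly the negative of $\bigl( \tfrac{u^{n+1} - u^n}{\Delta t}, v \bigr)$, so that
\begin{equation}
E[u^{n+1}] - E[u^n] = -\left( \frac{u^{n+1} - u^n}{\Delta t}, u^{n+1} - u^n \right).
\end{equation}

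To finish, I would write $u^{n+1} - u^n = \Delta t \cdot \tfrac{u^{n+1}-u^n}{\Delta t}$ and pull the scalar $\Delta t$ out of the inner product, obtaining
\begin{equation}
E[u^{n+1}] - E[u^n]
= -\Delta t \left\| \frac{u^{n+1} - u^n}{\Delta t} \right\|_{L^2(0,1)}^2,
\end{equation}
after which division by $\Delta t > 0$ yields the stated identity, and nonnegativity of the squared norm gives the inequality. No estimate enters anywhere: the argument is an exact cancellation. The only genuine idea — and thus the main obstacle in spirit, though it is not really hard — is recognizing that the test function must be the increment $u^{n+1} - u^n$ itself, so that the bilinear structure of the scheme matches precisely the telescoped energy difference produced by the discrete chain rule. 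Everything else is bookkeeping with the $L^2(0,1)$ inner product.
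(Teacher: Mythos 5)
Your proof is correct and takes essentially the same approach as the paper: both arguments combine the integrated discrete chain rule (i.e.\ \eqref{eq:dpd-graph} with $u=u^{n+1}$, $v=u^n$, which is \eqref{eq:disc-chain-graph} before integration by parts) with testing the weak form \eqref{eq:dpdm-graph} by the time increment. The only, immaterial, difference is scaling: you test with $v=u^{n+1}-u^n$ and divide by $\Delta t$ at the end, while the paper tests directly with $v=(u^{n+1}-u^n)/\Delta t$.
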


\begin{proof}
Let us write
\begin{equation}
\partial_\rd u^n = \frac{u^{n+1} - u^n}{\Delta t}.
\end{equation}
Then, substituting $v = \partial_\rd u^n$ into the weak form, \eqref{eq:dpdm-graph}, we can derive
\begin{align}
\frac{E[u^{n+1}] - E[u^n]}{\Delta t}
&= \int_0^1 \frac{G(u^{n+1}_\zeta, u^{n+1}_{\zeta\zeta}) - G(u^{n}_\zeta, u^{n}_{\zeta\zeta})}{\Delta t} d\zeta \\
&= \left( \frac{\partial G_\rd}{\partial(u^{n+1}_\zeta,u^n_\zeta)}, \partial_\rd u^n_\zeta \right)
+ \left( \frac{\partial G_\rd}{\partial(u^{n+1}_{\zeta\zeta},u^n_{\zeta\zeta})}, \partial_\rd u^n_{\zeta\zeta} \right) \\
&= - \|\partial_\rd u^n\|_{L^2(0,1)}^2.
\end{align}
\end{proof}
Note that the key point is substituting $v = \partial_\rd u^n$ into \eqref{eq:dpdm-graph}.
Therefore, this proof can be derived also in the case of the Galerkin method.

\subsection{B-spline curves}
\label{subsec:bspline}
In our scheme, we use B-spline curves to discretize the solution curves.
We say that a set of points $\Xi = \{ \xi_1, \xi_2, \dots, \xi_n \} \subset \bR$ is a \textit{knot vector} if $\xi_i \le \xi_{i+1}$ for all $i$.
\begin{definition}[B-spline basis functions and B-spline curves]
Let $p \in \bN_0$, $n \in \bN$, and $\Xi = \{ \xi_1, \xi_2, \dots, \xi_n \}$ be a knot vector.
\begin{enumerate}
\item The \textit{$i$-th B-spline basis function of degree $p$} with respect to $\Xi$ is a piecewise polynomial function $N^\Xi_{p,i}$ that is generated by the following formula:
\begin{align}
N^\Xi_{0,i}(\xi) &= \chi_{[\xi_i,\xi_{i+1})}(\xi), 
\quad \xi \in \bR,
%& i &= 1,2,\dots, n-1, 
\\
\shortintertext{for $i = 1,2,\dots, n-1$, and}
N^\Xi_{p,i}(\xi) &= \frac{\xi - \xi_i}{\xi_{i+p} - \xi_i} N^\Xi_{p-1,i}(\xi) + \frac{\xi_{i+p+1} - \xi}{\xi_{i+p+1} - \xi_{i+1}} N^\Xi_{p-1,i+1}(\xi), 
\quad \xi \in \bR,
%& i &= 1,2,\dots,n-p-1, \quad p \ge 1,
\end{align}
for $i = 1,2,\dots,n-p-1$ and $p \ge 1$, 
where $\chi_I$ is the characteristic function of $I \subset \bR$.
Here, if $\xi_{i+p}=\xi_i$ (resp.\ $\xi_{i+p+1}=\xi_{i+1}$), then the term $(\xi - \xi_i)/(\xi_{i+p} - \xi_i)$ (resp.\ $(\xi_{i+p+1} - \xi)/(\xi_{i+p+1} - \xi_{i+1})$) is null.
\item A curve $\bu \colon [a,b] \to \bR^2$ is a \textit{B-spline curve of degree $p$} if $\bu$ is represented by
\begin{equation}
\bu(\zeta) = \sum_{i=1}^{n-p-1} N^\Xi_{p,i}(\zeta) \bP_i,
\quad \zeta \in [a,b],
\end{equation}
for some knot vector $\Xi$ and $n \in \bN$.
The coefficient $\bP_i$ is called a \textit{control point}.
\end{enumerate}
\end{definition}
In fact, if the knot vector is disjoint (i.e., $i \ne j \implies \xi_i \ne \xi_j$), then it is known that $N^\Xi_{p,i}$ is a $C^{p-1}$-function.
For more details on the properties of B-spline functions, we refer the reader to \cite{Far99,PieT97}.

In the present paper, we only consider the periodic B-spline functions and curves.
Let $[a,b] \subset \bR$ be an interval, $p \in \bN$, $N \in \bN$, and $h=1/N$.
We define a knot vector $\Xi$ as
\begin{equation}
\Xi = \{ \xi_i \}_{i=1}^{N+2p+1} = \{ a-ph, a-(p-1)h, \dots, b+(p-1)h, b+ph \},
\label{eq:periodic-knot}
\end{equation}
and let $N^\Xi_{p,i}$ be the corresponding B-spline basis function.
Note that $\operatorname{supp} N^\Xi_{p,i} = [\xi_i,\xi_{i+p+1}]$.
Then, if $N > p$, we can see that
\begin{equation}
\left( \frac{d}{d\zeta} \right)^m N^\Xi_{p,i}(a) 
= 
\left( \frac{d}{d\zeta} \right)^m N^\Xi_{p,i+N}(b),
%\qquad i=1,2,\dots,p, \quad m=0,1,\dots,p-1,
\end{equation}
for $i=1,2,\dots,p$ and $m=0,1,\dots,p-1$.
Therefore, the function
\begin{equation}
B_{p,i}(\zeta) = B_{h,p,i}(\zeta) = \begin{cases}
N^\Xi_{p,i}(\zeta), & \zeta \in [a,\xi_{i+p+1}], \\
N^\Xi_{p,i+N}(\zeta), & \zeta \in [\xi_{i+N},b], \\
0, & \text{otherwise}
\end{cases}
\quad i=1,2,\dots,p
\label{eq:periodic-B-spline}
\end{equation}
is a periodic $C^{p-1}$-function in $[a,b]$.
The restriction $N^\Xi_{p,i}|_{[a,b]}$ for $i > p$ is also $C^{p-1}$-periodic on $[a,b]$.
Then, we define a closed B-spline curve as follows.

\begin{definition}[Periodic B-spline]
\label{def:B-spline}
Let $[a,b] \subset \bR$ be an interval, $p \in \bN$, $N \in \bN$ with $N>p$, and $h=1/N$.
Then, we define a \textit{periodic B-spline basis function of degree p} $B_{p,i} = B_{h,p,i}$ by \eqref{eq:periodic-B-spline} for $i \le p$ and by $B_{p,i} = N^\Xi_{p,i}|_{[a,b]}$ for $i =p,p+1,\dots,N$,
where $\Xi$ is a knot vector defined by \eqref{eq:periodic-knot}.
We also define a \textit{closed B-spline curve} as a curve $\bu \colon [a,b] \to \bR^2$ expressed by
\begin{equation}
\bu(\zeta) = \sum_{i=1}^{N} B_{p,i}(\zeta) \bP_i,
\quad \zeta \in [a,b],
\end{equation}
for some $\{ \bP_i \}_{i=1}^N \subset \bR^2$.
\end{definition}
It is clear that a closed B-spline curve is a $C^{p-1}$-curve.

\section{Derivation of an energy dissipative numerical scheme}
\label{section:scheme}

In this section, we derive a numerical scheme for geometric gradient flows \eqref{eq:gf} for the energy functional $E$ given by \eqref{eq:eng}.
We first consider the time discretization,
and recall the idea of the DPDM.
Using a similar approach as for the DPDM, we derive a discretization of the chain rule.
The definition of the partial derivatives \eqref{eq:dpd-graph} is a discrete analogue of the chain rule formula
\begin{equation}
\partial_t G(u_\zeta, u_{\zeta\zeta})
= G_p(u_\zeta, u_{\zeta\zeta}) u_{\zeta t} + G_q(u_\zeta, u_{\zeta\zeta}) u_{\zeta\zeta t},
\end{equation}
for a smooth function $u$.
In our case, the corresponding chain rule can be expressed as
\begin{equation}
\frac{d}{dt} E[\bu] = \int \grad E(\bu) \cdot \bu_t ds(\bu).
\label{eq:chain}
\end{equation}
Here, we denote the line element of $\bu$ by $ds(\bu)$ to emphasize the dependence on $\bu$.
Now, we discretize the chain rule \eqref{eq:chain}.
We first change the time derivatives to time differences by expressing
$\frac{d}{dt} E[\bu]$ and $\bu_t$ as $E[\bu] - E[\bv]$ and $\bu - \bv$, respectively.
Moreover, the line element $ds(\bu)$ should be changed appropriately.
In the original formula \eqref{eq:chain}, there is one function $\bu$ only.
However, in the discretization, there are two functions $\bu$ and $\bv$ as in \eqref{eq:dpd-graph}.
Therefore, we have some choices to discretize the term $ds(\bu)$,
for example, $ds(\bu)$, $ds(\bv)$, and $ds((\bu+\bv)/2)$.
Here, we use $ds((\bu+\bv)/2)$.
Then, we define a discrete gradient, $\grad_\rd E \colon (\bH^m_\pi)^2 \to \bR^2$, with a function that satisfies the following formula:
\begin{equation}
E[\bu] - E[\bv] = \int \grad_\rd E(\bu, \bv) ds\left( \frac{\bu+\bv}{2} \right),
\quad
\forall \bu,\bv \in \bH^m_\pi.
\label{eq:disc-chain}
\end{equation}
Thus, according to \eqref{eq:dpdm-graph-strong}, the strong form of the time-discrete problem is written as follows:
\begin{equation}
\frac{\bu^{n+1} - \bu^n}{\Delta t} 
= - \grad_\rd E(\bu^{n+1}, \bu^n),
\quad n \in \bN.
\label{eq:disc-gf-strong}
\end{equation}
The discrete chain rule \eqref{eq:disc-chain} can then be expressed as
\begin{equation}
E[\bu] - E[\bv] = \int_0^1 \left| \frac{\bu_\zeta+\bv_\zeta}{2} \right| \grad_\rd E(\bu, \bv)
\cdot (\bu - \bv) d\zeta,
\quad
\forall \bu,\bv \in \bH^m_\pi,
\label{eq:disc-chain2}
\end{equation}
and comparing \eqref{eq:disc-chain2} with \eqref{eq:disc-chain-graph}, we can derive the relationship between $\grad_\rd E$ and the discrete first derivative $\partial E_\rd / \partial (\bu,\bv)$ as follows.
\begin{equation}
\left| \frac{\bu_\zeta+\bv_\zeta}{2} \right| \grad_\rd E(\bu, \bv)
= \frac{\delta E_\rd}{\delta (\bu,\bv)}.
\end{equation}
Here $\partial E_\rd / \partial (\bu,\bv)$ is a vector-valued function.
Letting $G(\bp_0,\bp_1,\dots,\bp_m) := F(\bp_0,\bp_1,\dots,\bp_m) |\bp_1|$, the energy $E$ is expressed by
\begin{equation}
E[\bu] = \int_0^1 G(\bu, \bu_\zeta, \dots, \partial_\zeta^m \bu) d\zeta,
\end{equation}
and thus the discrete first derivative is given by
\begin{equation}
\frac{\delta E_\rd}{\delta (\bu,\bv)}
=
\sum_{j=0}^{m} (-1)^j \left( \frac{\partial}{\partial\zeta} \right)^j \frac{\partial G_\rd}{\partial(\partial_\zeta^j \bu, \partial_\zeta^j \bv)}.
\end{equation}
Here, we define the (vector-valued) partial derivatives,
\begin{equation}
\frac{\partial G_\rd}{\partial(\partial_\zeta^j \bu, \partial_\zeta^j \bv)}
=
\left(
\frac{\partial G_\rd}{\partial(\partial_\zeta^j u_1, \partial_\zeta^j v_1)}
,\
\frac{\partial G_\rd}{\partial(\partial_\zeta^j u_2, \partial_\zeta^j v_2)}
\right)^T,
\quad j=0,1,\dots,m
\end{equation}
as functions that satisfy the relation
\begin{equation}
G(\bu, \bu_\zeta, \dots, \partial_\zeta^m \bu) - G(\bv, \bv_\zeta, \dots, \partial_\zeta^m \bv)
= \sum_{j=0}^m \frac{\partial G_\rd}{\partial(\partial_\zeta^j \bu, \partial_\zeta^j \bv)} \cdot \partial_\zeta^j (\bu - \bv),
%\quad \forall \bu,\bv \in \bH^m_\pi.
\label{eq:dpm}
\end{equation}
for all $\bu,\bv \in \bH^m_\pi$.
Note that, as in the previous case (Subsection~\ref{subsec:dpdm}), the partial derivative may not be unique.
Now, instead of solving \eqref{eq:disc-gf-strong}, we may solve the equation
\begin{equation}
\left| \frac{\bu^{n+1}_\zeta + \bu^n_\zeta}{2} \right| \frac{\bu^{n+1} - \bu^n}{\Delta t}
= - \sum_{j=0}^{m} (-1)^j \left( \frac{\partial}{\partial\zeta} \right)^j \frac{\partial G_\rd}{\partial(\partial_\zeta^j \bu^{n+1}, \partial_\zeta^j \bu^n)},
\label{eq:disc-gf-strong2}
\end{equation}
and the weak form of \eqref{eq:disc-gf-strong2} gives our semi-discrete scheme for the gradient flow \eqref{eq:gf}.
Note that the time increment $\Delta t$ can differ at each step.

\begin{scheme}[Semi-discrete scheme for the geometric gradient flow]
\label{scheme:semi-disc}
Find $\bu^{n+1} \in \bH^m_\pi$ that satisfies
\begin{equation}
\left( \left| \frac{\bu^{n+1}_\zeta + \bu^n_\zeta}{2} \right| \frac{\bu^{n+1} - \bu^n}{\Delta t_n}, \bv \right)
=
- \sum_{j=0}^{m} \left( \frac{\partial G_\rd}{\partial(\partial_\zeta^j \bu^{n+1}, \partial_\zeta^j \bu^n)}, \partial_\zeta^j \bv \right),
\quad \forall \bv \in \bH^m_\pi
\label{eq:disc-gf-semi}
\end{equation}
for given $\bu^n \in \bH^m_\pi$.
\end{scheme}

We now consider the full discretization of the gradient flow \eqref{eq:gf}.
Let $\bV_h^p$ be the space of closed B-spline curves of degree $p$ as defined in Definition~\ref{def:B-spline}.
Then, by the Sobolev embedding theorem, $\bV_h^p \hookrightarrow \bH^m_\pi$ if $p \ge m+1$.
Thus, we can derive a fully discretized problem by the Galerkin method.
\begin{scheme}[Fully discretized scheme for the geometric gradient flow]
\label{scheme:full-disc}
Let $N \in \bN$, $h=1/N$, and $p \ge m+1$.
Assume $\bu_h^n \in \bV_h^p$ is given.
Find $\bu_h^{n+1} \in \bV_h^m$ that satisfies
\begin{equation}
\left( \left| \frac{\bu^{n+1}_{h,\zeta} + \bu^n_{h,\zeta}}{2} \right| \frac{\bu_h^{n+1} - \bu_h^n}{\Delta t_n}, \bv_h \right)
=
- \sum_{j=0}^{m} \left( \frac{\partial G_\rd}{\partial(\partial_\zeta^j \bu_h^{n+1}, \partial_\zeta^j \bu_h^n)}, \partial_\zeta^j \bv_h \right),
%\quad \forall \bv_h \in \bV_h^p
\label{eq:disc-gf}
\end{equation}
for all $\bv_h \in \bV_h^p$.
%for given $\bu_h^n \in \bV_h^p$.
\end{scheme}

Then, we can establish the discrete energy dissipation property.
\begin{lemma}[Discrete energy dissipation]
\label{lem:dissipation}
Let $\bu_h^n$ and $\bu_h^{n+1}$ satisfy the relation \eqref{eq:disc-gf}.
Then, we have
\begin{equation}
\frac{E[\bu_h^{n+1}] - E[\bu_h^n]}{\Delta t_n}
= - \int_0^1 \left| \frac{\bu^{n+1}_{h,\zeta} + \bu^n_{h,\zeta}}{2} \right| \left| \frac{\bu_h^{n+1} - \bu_h^n}{\Delta t_n} \right|^2 d\zeta
\le 0.
\end{equation}
\end{lemma}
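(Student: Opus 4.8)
The plan is to transcribe the DPDM dissipation argument from Subsection~\ref{subsec:dpdm}, with the line-element weight $\left|(\bu_\zeta+\bv_\zeta)/2\right|$ taking the role that the trivial weight played in the graph case. First I would abbreviate the discrete time difference as $\partial_\rd \bu_h^n = (\bu_h^{n+1}-\bu_h^n)/\Delta t_n$. The key preliminary observation is that $\partial_\rd \bu_h^n$ is itself an admissible test function: since $\bV_h^p$ is a linear space and both $\bu_h^{n+1}$ and $\bu_h^n$ belong to it, so does their difference and hence $\partial_\rd \bu_h^n \in \bV_h^p$. This is precisely the point flagged after the graph lemma, namely that the proof carries over to the Galerkin setting.

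Next I would convert the pointwise discrete chain rule into a statement about the energy. Integrating the defining relation \eqref{eq:dpm} for the discrete partial derivatives over $[0,1]$ with $\bu=\bu_h^{n+1}$ and $\bv=\bu_h^n$, and recalling that $E[\bu]=\int_0^1 G(\bu,\bu_\zeta,\dots,\partial_\zeta^m\bu)\,d\zeta$, I obtain
\begin{equation}
E[\bu_h^{n+1}] - E[\bu_h^n]
= \sum_{j=0}^m \left( \frac{\partial G_\rd}{\partial(\partial_\zeta^j \bu_h^{n+1}, \partial_\zeta^j \bu_h^n)},\ \partial_\zeta^j (\bu_h^{n+1} - \bu_h^n) \right),
\end{equation}
where $(\cdot,\cdot)$ denotes the $L^2(0,1)$ inner product. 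Dividing by $\Delta t_n$ and pulling the scalar through each $\partial_\zeta^j$ replaces $\partial_\zeta^j(\bu_h^{n+1}-\bu_h^n)$ by $\partial_\zeta^j(\partial_\rd \bu_h^n)$ in the second slot of every summand.

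The decisive step is then to substitute $\bv_h = \partial_\rd \bu_h^n$ into the weak form \eqref{eq:disc-gf}. By the previous paragraph the right-hand side of \eqref{eq:disc-gf} evaluated at this test function equals the negative of the sum just computed, so matching the two expressions yields
\begin{equation}
\frac{E[\bu_h^{n+1}] - E[\bu_h^n]}{\Delta t_n}
= - \left( \left| \frac{\bu^{n+1}_{h,\zeta} + \bu^n_{h,\zeta}}{2} \right| \partial_\rd \bu_h^n,\ \partial_\rd \bu_h^n \right)
= - \int_0^1 \left| \frac{\bu^{n+1}_{h,\zeta} + \bu^n_{h,\zeta}}{2} \right| \left| \partial_\rd \bu_h^n \right|^2 d\zeta.
\end{equation}
Finally, because the weight $\left|(\bu^{n+1}_{h,\zeta}+\bu^n_{h,\zeta})/2\right|$ is nonnegative and $\left|\partial_\rd \bu_h^n\right|^2 \ge 0$, the integrand is nonnegative, which gives the claimed inequality.

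I do not expect a serious obstacle here: once the discrete chain rule \eqref{eq:dpm} and the choice of weight $ds((\bu+\bv)/2)$ are in hand, the computation is a line-by-line analogue of the graph proof. The only point genuinely requiring care is the admissibility of $\partial_\rd \bu_h^n$ as a test function, and this is immediate from the vector-space structure of $\bV_h^p$. It is worth noting that the nonnegativity of the right-hand side hinges entirely on having placed the line element outside the quadratic factor, which is exactly what the discretization $ds((\bu+\bv)/2)$ secures.
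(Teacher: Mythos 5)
Your proposal is correct and follows essentially the same argument as the paper: substitute the difference quotient $\bv_h = (\bu_h^{n+1}-\bu_h^n)/\Delta t_n$ into the scheme \eqref{eq:disc-gf} and use the defining relation \eqref{eq:dpm} of the discrete partial derivatives to identify the right-hand side with the energy difference. Your explicit remarks on the admissibility of the test function (linearity of $\bV_h^p$) and on the role of the weight $ds((\bu+\bv)/2)$ are points the paper leaves implicit, but the substance of the proof is identical.
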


\begin{proof}
Substituting $\bv_h = (\bu_h^{n+1} - \bu_h^n)/\Delta t_n$ into the scheme \eqref{eq:disc-gf}, we have
\begin{align}
&\int_0^1 \left| \frac{\bu^{n+1}_{h,\zeta} + \bu^n_{h,\zeta}}{2} \right| \left| \frac{\bu_h^{n+1} - \bu_h^n}{\Delta t_n} \right|^2 d\zeta \\
= & - \sum_{j=0}^{m} \left( \frac{\partial G_\rd}{\partial(\partial_\zeta^j \bu_h^{n+1}, \partial_\zeta^j \bu_h^n)}, \frac{\partial_\zeta^j (\bu_h^{n+1} - \bu_h^n)}{\Delta t_n} \right) \\
= & - \int_0^1 \frac{G(\bu_h^{n+1},\dots,\partial_\zeta^m \bu_h^{n+1}) - G(\bu_h^{n},\dots,\partial_\zeta^m \bu_h^{n})}{\Delta t_n} d\zeta \\
= & - \frac{E[\bu_h^{n+1}] - E[\bu_h^n]}{\Delta t_n},
\end{align}
by the definition of the partial derivatives \eqref{eq:dpm}.
Hence we can establish the desired assertion.
\end{proof}

\section{Numerical examples}
\label{section:num_exp}

In this section, we show some numerical examples of the elastic flow \eqref{eq:elastic} computed by our scheme \eqref{eq:disc-gf}.
Here, the functional is the elastic energy
\begin{equation}
E[\bu] = \varepsilon^2 \int |\bkappa|^2 ds + \int ds
= \int_0^1 \left( \varepsilon^2 \frac{\det(\bu_\zeta,\bu_{\zeta\zeta})^2}{|\bu_\zeta|^5} + |\bu_\zeta| \right) d\zeta,
\label{eq:elastic-energy}
\end{equation}
where
\begin{equation}
\det(\bu_\zeta,\bu_{\zeta\zeta})
= \det \begin{pmatrix}
u_{1,\zeta} & u_{1,\zeta\zeta} \\ u_{2,\zeta} & u_{w,\zeta\zeta}
\end{pmatrix}
= u_{1,\zeta}u_{2,\zeta\zeta} - u_{1,\zeta\zeta}u_{2,\zeta},
\quad 
%\bu(\zeta) = (u_1(\zeta), u_2(\zeta))^T.
\bu(\zeta) = \begin{pmatrix}
u_1(\zeta) \\ u_2(\zeta)
\end{pmatrix}.
\end{equation}
It is known that equation \eqref{eq:elastic} has a unique time-global solution (see, e.g., \cite[Theorem~3.2]{DziKS02}).
Therefore, the turning number $|\int \bkappa ds|/(2\pi) \in \bN$ is invariant.

To calculate the discrete partial derivatives for $E$,
let 
\begin{equation}
G_1(\bu_\zeta,\bu_{\zeta\zeta}) = \frac{\det(\bu_\zeta,\bu_{\zeta\zeta})^2}{|\bu_\zeta|^5},
\quad
G_2(\bu_\zeta) = |\bu_\zeta|
\end{equation}
for $\bu \in \bH^2_\pi$.
Then, the energy density function for $E$ is $G:=G_1+G_2$.
We can compute the partial derivatives of $G_2$ since
\begin{equation}
G_2(\bu_\zeta) - G_2(\bv_\zeta)
= \frac{\bu_\zeta + \bv_\zeta}{|\bu_\zeta| + |\bv_\zeta|} \cdot (\bu_\zeta - \bv_\zeta),
\end{equation}
which implies
\begin{equation}
\frac{\partial G_{2,\rd}}{\partial(u_{j,\zeta},v_{j,\zeta})}
= \frac{u_{j,\zeta} + v_{j,\zeta}}{|\bu_\zeta| + |\bv_\zeta|},
\quad j=1,2.
\end{equation}
We can derive the partial derivatives of $G_1$ in several ways.
In the following examples, these derivatives are computed by dividing $G_1(\bu) - G_1(\bv)$ as follows:
\begin{align}
G_1(\bu_\zeta,\bu_{\zeta\zeta}) - G_1(\bv_\zeta,\bv_{\zeta\zeta})
&= \frac{\det(\bu_\zeta, \bu_{\zeta\zeta})^2 - \det(\bv_\zeta, \bv_{\zeta\zeta})^2}{|\bu_\zeta|^5}  \\
&+ \det(\bv_\zeta, \bv_{\zeta\zeta})^2 \left(
\frac{1}{|\bu_\zeta|^5} - \frac{1}{|\bv_\zeta|^5}
\right).
\end{align}
The first and the second terms of the right-hand side are calculated as
\begin{align}
\det(\bu_\zeta, \bu_{\zeta\zeta})^2 - \det(\bv_\zeta, \bv_{\zeta\zeta})^2 
%= & [\det(\bu_\zeta, \bu_{\zeta\zeta}) + \det(\bv_\zeta, \bv_{\zeta\zeta})][\det(\bu_\zeta, \bu_{\zeta\zeta}) - \det(\bv_\zeta, \bv_{\zeta\zeta})] \\
& = [\det(\bu_\zeta, \bu_{\zeta\zeta}) + \det(\bv_\zeta, \bv_{\zeta\zeta})] \\
& \times 
\big[v_{2,\zeta\zeta}(u_{1,\zeta}-v_{1,\zeta}) - v_{1,\zeta\zeta}(u_{2,\zeta}-v_{2,\zeta}) \\
& - u_{2,\zeta}(u_{1,\zeta\zeta}-v_{1,\zeta\zeta}) + u_{1,\zeta}(u_{2,\zeta\zeta}-v_{2,\zeta\zeta}) \big],
\end{align}
and
\begin{align}
\frac{1}{|\bu_\zeta|^5} - \frac{1}{|\bv_\zeta|^5}
&= - \frac{|\bu_\zeta|^{10} - |\bv_\zeta|^{10}}{|\bu_\zeta|^5|\bv_\zeta|^5(|\bu_\zeta|^5 + |\bv_\zeta|^5)} \\
&= - \frac{\sum_{k=0}^{4} |\bu_\zeta|^{8-2k}|\bv_\zeta|^{2k}}{|\bu_\zeta|^5|\bv_\zeta|^5(|\bu_\zeta|^5 + |\bv_\zeta|^5)} (\bu_\zeta + \bv_\zeta)
\cdot (\bu_\zeta - \bv_\zeta),
\end{align}
respectively.
Although we have omitted them, we can derive partial derivatives of $G_1$ with these equations.

Before showing numerical examples, we recall the steady-state solutions for the elastic flow.
It is known that steady closed curves of the elastic energy \eqref{eq:elastic-energy} are circles of radius $\varepsilon$, the figure-eight-shaped curve with scale $\varepsilon$, and their multiple versions \cite{Lov44,Sin08} (see Figure~\ref{fig:steady}).
Their energies are
\begin{equation}
E[\text{circle}] = 4 \pi \varepsilon,
\quad 
E[\text{eight-shaped}] \approx \varepsilon \cdot 21.2075,
\end{equation}
respectively.
The exact value of the latter energy is expressed by the elliptic integrals (cf.~\cite{Sac12}).

\begin{figure}[htb]
\centering
\includegraphics{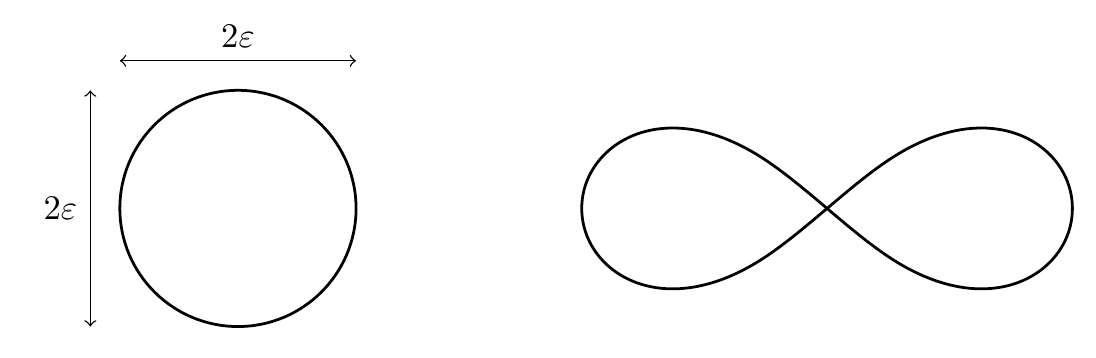}
\caption{Steady states of the elastic energy \protect\eqref{eq:elastic-energy}.}
\label{fig:steady}
\end{figure}

In our numerical examples, we set the time increment as
\begin{equation}
\Delta t_0 = \tau \cdot \min\left\{ 1, \ 100 \left( \int |\bkappa(u_h^0)|^2 ds \right)^{-1} \right\}
\end{equation}
and
\begin{equation}
\Delta t_n = \tau \cdot \min\left\{ 1, \ 100 \left( \frac{E[\bu_h^{n}] - E[\bu_h^{n-1}]}{\Delta t_{n-1}} \right)^{-2} \right\},
\quad n \ge 1.
\end{equation}
We found these values empirically.
We solved equation \eqref{eq:disc-gf} at each step with the Newton method.
Moreover, in each step, when two adjacent control points are too close (more precisely, when the distance is less than $0.1 \cdot \max \{ \varepsilon, 0.05 \}$), one of them was removed.
Note that the energy may increase when control points are eliminated; however, the shape of the curve will be less affected (see Figure~\ref{fig:elim}).

\begin{figure}[htb]
\centering
\includegraphics{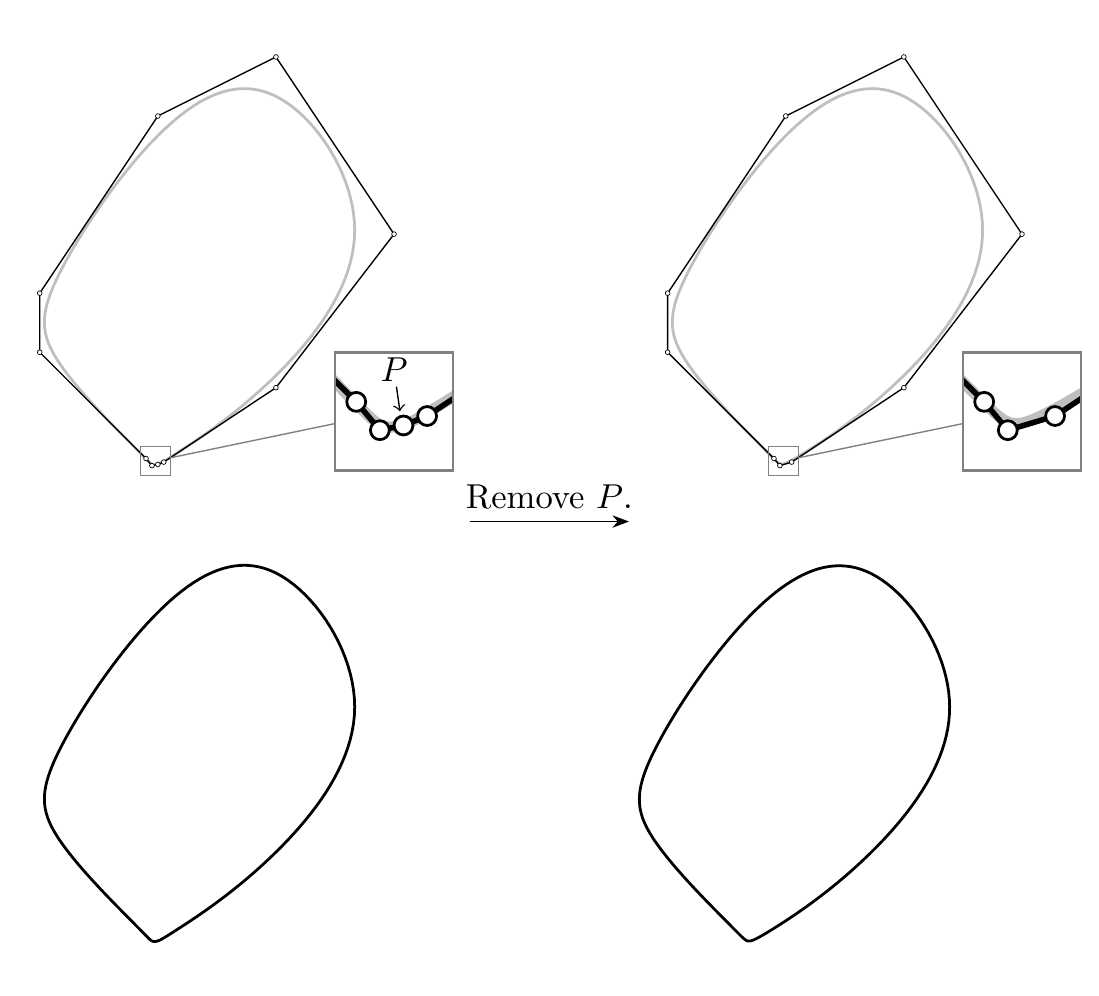}
\caption{Effect of eliminating control points. 
The left two figures show B-spline curves of degree $p=3$ with respect to the control points as shown in the upper figure.
The right two figures show the B-spline curves of the same degree with respect to the control points except for the point $P$.}
\label{fig:elim}
\end{figure}

We show six examples here.
In all examples, we use the B-spline curves of degree $p=3$.
Therefore, every curve below is of class $C^2$.
Videos of the following examples are available on YouTube%
\footnote{URL: \url{https://www.youtube.com/playlist?list=PLMF3dSqWEii6coWnwHHOh12mrtMmIXIi0}}.

\begin{example}\label{ex:circle_0}
The first example is shown in Figure~\ref{fig:circle_0}.
The initial curve is a circle.The parameters are
\begin{equation}
\varepsilon=0.1, \quad N=6, \quad \tau=0.01.
\end{equation}
In this example, the elimination of control points was not necessary.
Figure~\ref{fig:circle_0-curve} shows the evolution of the curve at $t \approx 0, 0.1, \dots,0.6$.
The energy at $t \approx 0.6$ is $E \approx 1.2583$.
Note that the exact value of the energy at the steady state is $4 \pi \varepsilon \approx 1.2566$.
Figure~\ref{fig:circle_0-energy} shows the evolution of the energy.
The discrete energy dissipation property is clearly visible.

In Figure~\ref{fig:circle_0-curve}, one can observe that the curve shrinks as the curvature flow \eqref{eq:curvature} until $t \approx 0.5$, and it stops shrinking when the radius approaches $\varepsilon$.
\end{example}

\begin{figure}[htb]
\centering
\begin{subfigure}[b]{.45\textwidth}
\centering
\includegraphics[page=1]{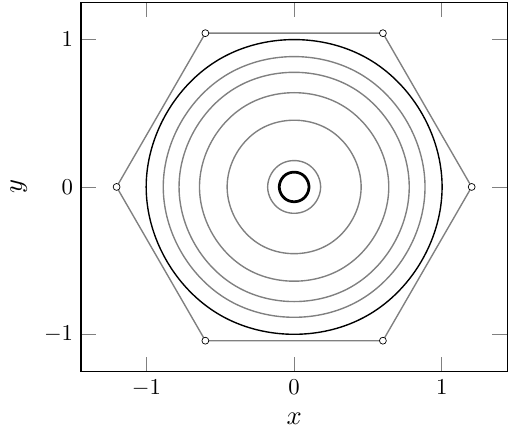}
\caption{Evolution of the circle. 
%The curves are the solution at $t\approx 0,0.1,\dots,0.6$. 
The outermost curve and the innermost one are at times $ t = 0$, and $t \approx 0.6$, respectively.}
\label{fig:circle_0-curve}
\end{subfigure}
\begin{subfigure}[b]{.45\textwidth}
\centering
\includegraphics[page=25]{curves.pdf}
\caption{Evolution of the energy.}
\label{fig:circle_0-energy}
\end{subfigure}
\caption{Example~\ref{ex:circle_0}.}
\label{fig:circle_0}
\end{figure}

\begin{example}\label{ex:eight_0}
The second example is shown in Figure~\ref{fig:eight_0}.
The initial curve is figure-eight-shaped. The parameters are
\begin{equation}
\varepsilon=0.2, \quad N=12, \quad \tau=0.01.
\end{equation}
As in the previous case, the elimination of control points is not necessary.
Figure~\ref{fig:eight_0-curve} shows the evolution of the curve at $t \approx 0, 0.2, \dots,1.2$ and Figure~\ref{fig:eight_0-energy} shows the evolution of the energy.
The energy at $t \approx 1.2$ is $E \approx 4.2433$.
Note that the exact value of the energy at the steady state is approximately $4.2415$.

In Figure~\ref{fig:eight_0-curve}, first the small loop (the right loop) shrinks faster than the larger one.
When the scale of the right loop becomes $\varepsilon$, shrinking stops, and the left one begins to shrink.
Finally, the left one also stops shrinking, and the curve approaches the steady state.
\end{example}

\begin{figure}[htb]
\centering
\begin{subfigure}[b]{.45\textwidth}
\centering
\includegraphics[page=2]{curves.pdf}
\caption{Evolution of the figure-eight-shaped curve. 
%The curves are the solution at $t\approx 0,0.1,\dots,0.6$. 
The outermost curve is the initial shape of the curve, and the innermost one is the curve at $t \approx 1.2$.}
\label{fig:eight_0-curve}
\end{subfigure}
\begin{subfigure}[b]{.45\textwidth}
\centering
\includegraphics[page=27]{curves.pdf}
\caption{Evolution of the energy.}
\label{fig:eight_0-energy}
\end{subfigure}
\caption{Example~\ref{ex:eight_0}.}
\label{fig:eight_0}
\end{figure}

\begin{example}\label{ex:double_0}
The third example is shown in Figure~\ref{fig:double_0-curve}.
The initial shape of the curve is a cardioid-like curve as shown in Figure~\ref{fig:double_0-1}.
The initial parameters of the curve are
\begin{equation}
\varepsilon=0.1, \quad N=12, \quad \tau=0.005.
\end{equation}
As in the previous cases, the elimination of control points is not necessary.
Figure~\ref{fig:double_0-curve} shows the evolution of the curve at $t \approx 0, 0.2, 0.4, 0.6$ and Figure~\ref{fig:double_0-energy} shows the evolution of the energy.
In this case, the steady-state is a double-looped circle with radius $\varepsilon=0.1$.
Therefore, the energy of the solution at $t \approx 0.6$ ($E \approx 2.5228$) is approximately twice the value of that of Example~\ref{ex:circle_0}.

The behavior of the curve is similar to Example~\ref{ex:eight_0}.
That is, first the smaller loop shrinks until the scale is approximately $\varepsilon$.
Then, the larger one shrinks and the curve approaches the steady state.
\end{example}

\begin{figure}[htb]
\begin{minipage}[b]{0.60\textwidth}
\centering
\begin{subfigure}[b]{.49\textwidth}
\centering
\includegraphics[page=3]{curves.pdf}
\caption{$t = 0$.}
\label{fig:double_0-1}
\end{subfigure}
\begin{subfigure}[b]{.49\textwidth}
\centering
\includegraphics[page=4]{curves.pdf}
\caption{$t \approx 0.2$.}
\label{fig:double_0-2}
\end{subfigure}
\\[2ex]
\begin{subfigure}[b]{.49\textwidth}
\centering
\includegraphics[page=5]{curves.pdf}
\caption{$t \approx 0.4$.}
\label{fig:double_0-3}
\end{subfigure}
\begin{subfigure}[b]{.49\textwidth}
\centering
\includegraphics[page=6]{curves.pdf}
\caption{$t \approx 0.6$.}
\label{fig:double_0-4}
\end{subfigure}
\caption{Evolution of the curves in Example~\ref{ex:double_0}.}
\label{fig:double_0-curve}
\end{minipage}
\begin{minipage}[b]{0.39\textwidth}
\centering
\includegraphics[page=29]{curves.pdf}
\caption{Evolution of the energy in Example~\ref{ex:double_0}.}
\label{fig:double_0-energy}
\end{minipage}
\end{figure}

\begin{example}\label{ex:circle_1}
This example shows a topology-changing solution.
The initial curve is the one shown in Figure~\ref{fig:circle_1-1}, and Figure~\ref{fig:circle_1-curve} shows its evolution.
Figures~\ref{fig:circle_1-energy} and \subref{fig:circle_1-numbers} show the evolution of the energy and the number of control points, respectively.
The parameters are
\begin{equation}
\varepsilon=0.2, \quad N=20 \ (\text{initially}), \quad \tau=0.005.
\end{equation}

One can observe that the topology of the curve changes at around $t = 1.05$ (Figures~\ref{fig:circle_1-5} and \subref{fig:circle_1-6}).
At the same time, the energy decreases drastically (Figure~\ref{fig:circle_1-energy}),
and some control points become concentrated (Figure~\ref{fig:circle_1-elim}).
As mentioned earlier, we implement an algorithm that deletes a control point if it is too close to the adjacent point.
Therefore, the elimination of control points occurs when the topology changes, and the number of control points finally converges to $N=11$.
\end{example}

\begin{figure}[htb]
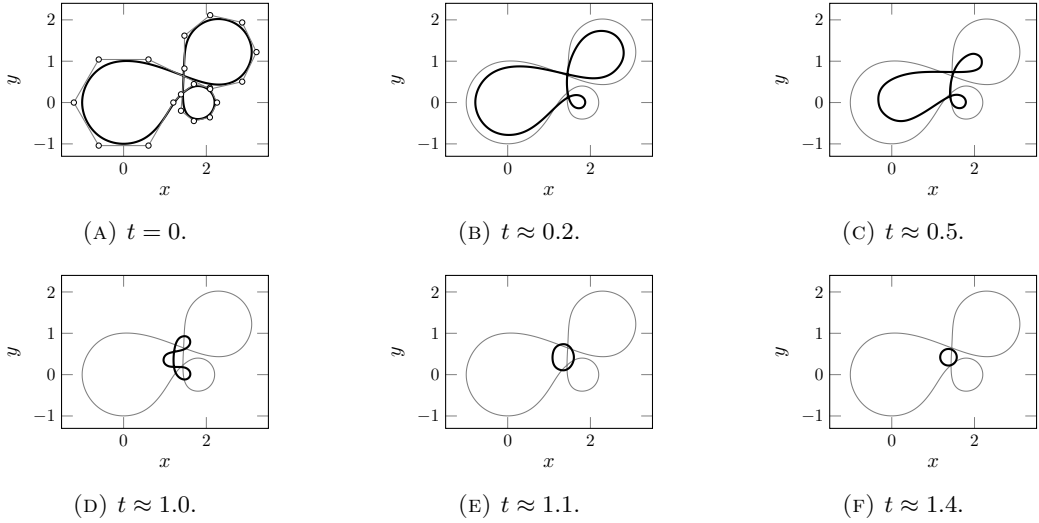

\centering
\begin{subfigure}[b]{.32\textwidth}
\centering
\includegraphics[page=7]{curves.pdf}
\caption{$t = 0$.}
\label{fig:circle_1-1}
\end{subfigure}
\begin{subfigure}[b]{.32\textwidth}
\centering
\includegraphics[page=8]{curves.pdf}
\caption{$t \approx 0.2$.}
\label{fig:circle_1-2}
\end{subfigure}
\begin{subfigure}[b]{.32\textwidth}
\centering
\includegraphics[page=9]{curves.pdf}
\caption{$t \approx 0.5$.}
\label{fig:circle_1-3}
\end{subfigure}
\\[2ex]
\begin{subfigure}[b]{.32\textwidth}
\centering
\includegraphics[page=10]{curves.pdf}
\caption{$t \approx 1.0$.}
\label{fig:circle_1-4}
\end{subfigure}
\begin{subfigure}[b]{.32\textwidth}
\centering
\includegraphics[page=11]{curves.pdf}
\caption{$t \approx 1.1$.}
\label{fig:circle_1-5}
\end{subfigure}
\begin{subfigure}[b]{.32\textwidth}
\centering
\includegraphics[page=12]{curves.pdf}
\caption{$t \approx 1.4$.}
\label{fig:circle_1-6}
\end{subfigure}
\caption{Evolution of the curves in Example~\ref{ex:circle_1}.}
\label{fig:circle_1-curve}
\end{figure}

\begin{figure}[htb]
\centering
\begin{subfigure}[t]{.45\textwidth}
\centering
\includegraphics[page=31]{curves.pdf}
\caption{Evolution of the energy.}
\label{fig:circle_1-energy}
\end{subfigure}
\begin{subfigure}[t]{.45\textwidth}
\centering
\includegraphics[page=32]{curves.pdf}
\caption{Evolution of the number of control points.}
\label{fig:circle_1-numbers}
\end{subfigure}
\caption{Example~\ref{ex:circle_1}.}
\label{fig:circle_1}
\end{figure}

\begin{figure}[htb]
\centering
\begin{subfigure}[b]{.32\textwidth}
\centering
\includegraphics[page=1]{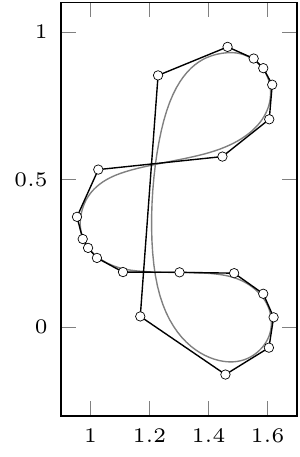}
\caption{$t \approx 1.0$, $N=20$.}
\end{subfigure}
\begin{subfigure}[b]{.32\textwidth}
\centering
\includegraphics[page=2]{elim-circle-1.pdf}
\caption{$t \approx 1.02$, $N=20$.}
\end{subfigure}
\begin{subfigure}[b]{.32\textwidth}
\centering
\includegraphics[page=3]{elim-circle-1.pdf}
\caption{$t \approx 1.03$, $N=19$.}
\end{subfigure}
\\[2ex]
\begin{subfigure}[b]{.32\textwidth}
\centering
\includegraphics[page=4]{elim-circle-1.pdf}
\caption{$t \approx 1.05$, $N=18$.}
\end{subfigure}
\begin{subfigure}[b]{.32\textwidth}
\centering
\includegraphics[page=5]{elim-circle-1.pdf}
\caption{$t \approx 1.06$, $N=17$.}
\end{subfigure}
\begin{subfigure}[b]{.32\textwidth}
\centering
\includegraphics[page=6]{elim-circle-1.pdf}
\caption{$t \approx 1.07$, $N=15$.}
\end{subfigure}
\\[2ex]
\begin{subfigure}[b]{.32\textwidth}
\centering
\includegraphics[page=7]{elim-circle-1.pdf}
\caption{$t \approx 1.15$, $N=14$.}
\end{subfigure}
\begin{subfigure}[b]{.32\textwidth}
\centering
\includegraphics[page=8]{elim-circle-1.pdf}
\caption{$t \approx 1.2$, $N=12$.}
\end{subfigure}
\begin{subfigure}[b]{.32\textwidth}
\centering
\includegraphics[page=9]{elim-circle-1.pdf}
\caption{$t \approx 1.3$, $N=11$.}
\end{subfigure}
\caption{Diminution of control points in Example~\ref{ex:circle_1} at the times described as in the figures.
The small circles represent control points, $N$ is the number of control points, and the gray colored curve is the solution.}
\label{fig:circle_1-elim}
\end{figure}

\begin{example}\label{ex:eight_1}
The following two examples investigate problems with more complicated solutions.
The initial shape of the curve is shown in Figure~\ref{fig:eight_1-1}, and Figure~\ref{fig:eight_1-curve} shows its evolution.
Figures~\ref{fig:eight_1-energy} and \subref{fig:eight_1-numbers} show the evolution of the energy and the number of control points, respectively.
The parameters are
\begin{equation}
\varepsilon=0.2, \quad N=49 \ (\text{initially}), \quad \tau=0.005.
\end{equation}

In this example, the topology of the curve changes frequently.
For example, the loop in the upper left of the curve disappears at around $t=1.35$.
When the topology changes, the energy decreases rapidly as in Example~\ref{ex:circle_1},
and the number of control points also decreases at the same time.
The final value of $N$ is $12$.
\end{example}

\begin{figure}[htb]
\centering
\begin{subfigure}[b]{.32\textwidth}
\centering
\includegraphics[page=13]{curves.pdf}
\caption{$t = 0$.}
\label{fig:eight_1-1}
\end{subfigure}
\begin{subfigure}[b]{.32\textwidth}
\centering
\includegraphics[page=14]{curves.pdf}
\caption{$t \approx 0.4$.}
\label{fig:eight_1-2}
\end{subfigure}
\begin{subfigure}[b]{.32\textwidth}
\centering
\includegraphics[page=15]{curves.pdf}
\caption{$t \approx 1.3$.}
\label{fig:eight_1-3}
\end{subfigure}
\\[2ex]
\begin{subfigure}[b]{.32\textwidth}
\centering
\includegraphics[page=16]{curves.pdf}
\caption{$t \approx 1.45$.}
\label{fig:eight_1-4}
\end{subfigure}
\begin{subfigure}[b]{.32\textwidth}
\centering
\includegraphics[page=17]{curves.pdf}
\caption{$t \approx 1.65$.}
\label{fig:eight_1-5}
\end{subfigure}
\begin{subfigure}[b]{.32\textwidth}
\centering
\includegraphics[page=18]{curves.pdf}
\caption{$t \approx 2.0$.}
\label{fig:eight_1-6}
\end{subfigure}
\caption{Evolution of the curves in Example~\ref{ex:eight_1}.}
\label{fig:eight_1-curve}
\end{figure}

\begin{figure}[htb]
\centering
\begin{subfigure}[t]{.45\textwidth}
\centering
\includegraphics[page=33]{curves.pdf}
\caption{Evolution of the energy.}
\label{fig:eight_1-energy}
\end{subfigure}
\begin{subfigure}[t]{.45\textwidth}
\centering
\includegraphics[page=34]{curves.pdf}
\caption{Evolution of the number of control points.}
\label{fig:eight_1-numbers}
\end{subfigure}
\caption{Example~\ref{ex:eight_1}.}
\label{fig:eight_1}
\end{figure}

\begin{example}\label{ex:circle_3}
The initial curve for the final example is shown in Figure~\ref{fig:circle_3-1}, and Figure~\ref{fig:circle_3-curve} shows its evolution.
Figures~\ref{fig:circle_3-energy} and \subref{fig:circle_3-numbers} show the evolution of the energy and the number of control points, respectively.
The parameters are
\begin{equation}
\varepsilon=0.2, \quad N=54 \ (\text{initially}), \quad \tau=0.005.
\end{equation}

The solution displays complicated behavior as in Example~\ref{ex:eight_1},
and the topology changes frequently.
However, since the turning number of the initial curve is one, the steady state is a circle with radius $\varepsilon$.
The energy and the number of control points decrease drastically when the topology changes, and the final number of control points is $N=12$.
\end{example}

\begin{figure}[htb]
\centering
\begin{subfigure}[b]{.32\textwidth}
\centering
\includegraphics[page=19]{curves.pdf}
\caption{$t = 0$.}
\label{fig:circle_3-1}
\end{subfigure}
\begin{subfigure}[b]{.32\textwidth}
\centering
\includegraphics[page=20]{curves.pdf}
\caption{$t \approx 0.4$.}
\label{fig:circle_3-2}
\end{subfigure}
\begin{subfigure}[b]{.32\textwidth}
\centering
\includegraphics[page=21]{curves.pdf}
\caption{$t \approx 1.3$.}
\label{fig:circle_3-3}
\end{subfigure}
\\[2ex]
\begin{subfigure}[b]{.32\textwidth}
\centering
\includegraphics[page=22]{curves.pdf}
\caption{$t \approx 1.5$.}
\label{fig:circle_3-4}
\end{subfigure}
\begin{subfigure}[b]{.32\textwidth}
\centering
\includegraphics[page=23]{curves.pdf}
\caption{$t \approx 1.9$.}
\label{fig:circle_3-5}
\end{subfigure}
\begin{subfigure}[b]{.32\textwidth}
\centering
\includegraphics[page=24]{curves.pdf}
\caption{$t \approx 2.3$.}
\label{fig:circle_3-6}
\end{subfigure}
\caption{Evolution of the curves in Example~\ref{ex:circle_3}.}
\label{fig:circle_3-curve}
\end{figure}

\begin{figure}[htb]
\centering
\begin{subfigure}[t]{.45\textwidth}
\centering
\includegraphics[page=35]{curves.pdf}
\caption{Evolution of the energy.}
\label{fig:circle_3-energy}
\end{subfigure}
\begin{subfigure}[t]{.45\textwidth}
\centering
\includegraphics[page=36]{curves.pdf}
\caption{Evolution of the number of control points.}
\label{fig:circle_3-numbers}
\end{subfigure}
\caption{Example~\ref{ex:circle_3}.}
\label{fig:circle_3}
\end{figure}

\section*{Acknowledgements}
I would like to thank Prof.~Yoshihiro Tonegawa and Dr.~Takahito Kashiwabara for bringing this topic to my attention and encouraging me through valuable discussions.
This work was supported by the Program for Leading Graduate Schools, MEXT,
Japan, and by JSPS KAKENHI (no.~15J07471).

\bibliographystyle{plain}
%%\bibliographystyle{alpha}
%\bibliography{elastic-flow}

\end{document}